\newtheorem{theorem}{Theorem}[section]
\newtheorem{lemma}[theorem]{Lemma}
\newtheorem{proposition}[theorem]{Proposition}
\theoremstyle{definition}
\newtheorem{definition}[theorem]{Definition}
\newtheorem{subsec}[theorem]{}
\renewenvironment{proof}{\par\noindent\textbf{Proof:}}{\hfill $\blacksquare$\par\smallskip}
\newcommand{\G}{\bar{G}}
\newcommand{\C}{\mathcal{C}}
\renewcommand{\O}{\mathcal{O}}
\newcommand{\Aall}{\mathbf{A}}
\newcommand{\Apall}{\mathbf{A'}}
\newcommand{\Ball}{\mathbf{B}}
\newcommand{\Call}{\mathbfcal{C}} \DeclareMathAlphabet\mathbfcal{OMS}{cmsy}{b}{n}
\newcommand{\Gall}{\mathbf{\G}}
\newcommand{\Mall}{\mathbf{\tilde{M}}}
\newcommand{\op}{^\mathrm{op}}
\newcommand{\Hom}[3]{\mathrm{Hom}_{#1}(#2,#3)}
\newcommand{\set}[1]{\left\{#1\right\}}
\newcommand{\titlename}	
{Group graded Morita equivalences\\for wreath products}
\newcommand{\shorttitlename}
{Group graded Morita equivalences for wreath products}
\newcommand{\authorname}      {Virgilius-Aurelian Minu\cb{t}\u{a}}
\newcommand{\shortauthorname} {V. A. Minu\cb{t}\u{a}}
\newcommand{\universityname}  {Babe\cb{s}-Bolyai University of Cluj-Napoca}
\newcommand{\facultyname}     {Faculty of Mathematics and Computer Science}
\newcommand{\departmentname}  {Department of Mathematics}
\renewcommand{\email}         {minuta.aurelian@math.ubbcluj.ro}
\newcommand{\articleabstract}{Starting with group graded Morita equivalences, we obtain Morita equivalences for tensor products and wreath products.}
\newcommand{\msc}{16W50, 20E22, 20C05, 20C20, 16D90, 16S35}
\renewcommand{\keywords}{Group graded algebras, wreath products, Morita equivalences, crossed products, centralizer subalgebra}
\titleformat{\section}{\Large\bfseries}{\thesection}{1em}{}
\title[\shorttitlename]{\LARGE{\titlename}}
\newcommand{\institution}{
\universityname\\
\facultyname\\
\departmentname}
\author[\shortauthorname]{\Large{\authorname}
\medskip\\
{\footnotesize \institution\\
email: \texttt{\href{mailto:\email}{\email}}
}}
\begin{document}
\begin{abstract}
\articleabstract\\[0.1cm]
\textsc{MSC 2010.} \msc\\[0.1cm]
\textsc{Key words.} \keywords
\end{abstract}
\begingroup
\def\uppercasenonmath#1{} 
\let\MakeUppercase\relax 
\maketitle
\endgroup
\thispagestyle{firstpagestyle}

\section{Introduction} In this article, we  continue the study done in \cite{art:MM1}, \cite{art:MM2} and \cite{art:MVA1}, and we obtain group graded Morita equivalences for tensor products (Proposition \ref{prop:tensorProduct_Morita}) and wreath products (Theorem \ref{th:main}). The main motivation for such constructions in the representation theory of finite groups is given by the fact that in order to prove most reduction theorems, recent results of Britta Sp\"ath, surveyed in \cite{art:Spath2017}, \cite{ch:Spath2017} and \cite{ch:Spath2018}, show that a new character triple can be constructed via a wreath product construction of character triples (\cite[Theorem 2.21]{ch:Spath2018}). There is a link between character triples and group graded Morita equivalences, presented in \cite{art:MM2}, so we want to prove that a similar wreath product construction can also be made for the corresponding group graded Morita equivalences.

More precisely, in Theorem 6.7 of \cite{art:MM2}, it is proved that certain character triples relations utilized by Britta Sp\"ath: the first-order relation (\cite[Definition 2.1]{ch:Spath2018}) and the central-order relation (\cite[Definition 2.7]{ch:Spath2018}), are consequences of a special type of group graded Morita equivalences induced by a graded bimodule over a $G$-graded $G$-acted algebra (usually denoted by $\C$ as in Section \ref{subsec:defs}), where $G$ is a finite group. More details about group graded Morita theory over $\C$ can be found in \cite{art:MVA1}.

Another motivation comes from the fact that it is already known  by \cite[Theorem 5.1.21]{book:Marcus1999} that  Morita equivalences can be extended to wreath products.

This paper is organized as follows: In Section 2, we introduce the general notations and we recall from \cite{art:MM2} the definitions of a $G$-graded $G$-acted algebra, of a $G$-graded algebra over $\C$, of a $G$-graded bimodule over $\C$ and the notion of a $G$-graded Morita equivalence over $\C$.
In Section 3, we prove that the previously recalled algebraic constructions are compatible with tensor products and the main proposition in this section, Proposition \ref{prop:tensorProduct_Morita}, proves that the tensor products of some group graded Morita equivalent algebras over some group graded group acted algebras remain group graded Morita equivalent over a group graded group acted algebra. In Section 4, we prove that the previously enumerated algebra types are also compatible with wreath products. Finally, in  Section 5, our main result, Theorem \ref{th:main}, proves that the wreath product between a $G$-graded bimodule over $\C$ and $S_n$ (the symmetric group of order $n$) is also a group graded bimodule over $\C^{\otimes n}$, and moreover, if this bimodule induces a $G$-graded Morita equivalence over $\C$, then its wreath product with $S_n$ will induce a group graded Morita equivalence over $\C^{\otimes n}$.
\medskip
\medskip
\medskip
\medskip
\section{Preliminaries} \label{sec:Preliminaries}

\begin{subsec}\label{subsec:Hypo}
All rings in this paper are associative with identity $1 \neq 0$ and all modules are left (unless otherwise specified) unital and finitely generated. Throughout this article $n$ will represent an arbitrary nonzero natural number, and $\O$ is a commutative ring.
\end{subsec}

\begin{subsec} Let $G$ be a finite group and $N$ a normal subgroup of $G$. We denote by $\G:=G/N$.

Note that most results in this paper will utilize ``$\G$-gradings'', although this is not essential: one may consider instead the gradings to be given directly by $G$. The reasoning behind this particular choice is to match our notations previously used in articles \cite{art:MM1} and \cite{art:MM2}, given that our main application for the results of this project is the strongly $\G$-graded algebra $A=b\O G$, where $b$ is a $\G$-invariant block of $\O N$.
\end{subsec}

\begin{subsec}\label{subsec:defs} We recall from \cite{art:MM2} the following definitions:
\begin{definition}
  An algebra $\C$ is a \textit{$\G$-graded $\G$-acted algebra} if
 \begin{enumerate}
	\item[(1)] $\C$ is $\G$-graded, and we write $\C=\bigoplus_{\bar{g}\in \bar G}\C_{\bar{g}}$;
	\item[(2)] $\G$ acts on $\C$ (always on the left in this article);
	\item[(3)] for all $\bar{g},\bar{h}\in \G$ and for all $c\in \C_{\bar{h}}$ we have  $\tensor*[_{}^{\bar{g}}]{c}{_{}^{}}\in \C_{\tensor*[_{}^{\bar{g}}]{\bar{h}}{_{}^{}}}$.
 \end{enumerate}
\end{definition}
\begin{definition} \label{def:algebraOverC}
Let $\C$ be a $\G$-graded $\G$-acted algebra. We say the $A$ is a $\G$-graded algebra over $\C$   if there is a  $\G$-graded $\G$-acted algebra homomorphism
\[\zeta:\C\to C_A(B),\] where $B:=A_1$ and $C_A(B)$ is the centralizer of $B$ in $A$,  i.e. for any $\bar{h}\in \G$ and $c\in \C_{\bar{h}}$, we have $\zeta(c)\in C_A(B)_{\bar{h}}$, and for every $\bar{g}\in\G$, $\zeta(\tensor*[^{\bar{g}}]{c}{})=\tensor*[^{\bar{g}}]{\zeta(c)}{}$.
\end{definition}
\begin{definition}
 Let $A$ and $A'$ be two $\G$-graded crossed products over a $\G$-graded $\G$-acted algebra $\C$, with structure maps $\zeta$ and $\zeta'$, respectively.

{\rm a)}	We say that $\tilde{M}$ is a $\G$-graded $(A,A')$-bimodule over $\C$ if:
	\begin{enumerate}
		\item[(1)] $\tilde{M}$ is an $(A,A')$-bimodule;
		\item[(2)] $\tilde{M}$ has a decomposition $\tilde{M}=\bigoplus_{\bar{g}\in\G}\tilde{M}_{\bar{g}}$ such that $A_{\bar{g}}\tilde{M}_{\bar{x}}A'_{\bar{h}}\subseteq \tilde{M}_{\bar{g}\bar{x}\bar{h}}$, for all $\bar{g}, \bar{x},\bar{h}\in \G$;
		\item[(3)] $\tilde{m}_{\bar{g}} c=\tensor*[^{\bar{g}}]{c}{} \tilde{m}_{\bar{g}}$, for all $c\in \C$, $\tilde{m}_{\bar{g}}\in\tilde{M}_{\bar{g}}$, $\bar{g}\in \G$, where $c\tilde {m} = \zeta(c)\tilde{m}$ and $\tilde{m}c=\tilde{m}\zeta'(c)$, for all $c\in \C$, $\tilde{m}\in\tilde{M}$.
	\end{enumerate}

{\rm b)} $\G$-graded $(A,A')$-bimodules over $\C$ form a category,
where the morphisms between $\G$-graded $(A,A')$-bimodules over $\C$ are just homomorphisms between $\G$-graded $(A,A')$-bimodules.
\end{definition}
\begin{definition}
Let $A$ and $A'$ be two $\G$-graded crossed products over a $\G$-graded $\G$-acted algebra $\C$, and let $\tilde{M}$ be a $\G$-graded $(A,A')$-bimodule over $\C$. Clearly, the $A$-dual $\tilde{M}^{\ast}=\Hom{A}{\tilde{M}}{A}$ of $\tilde{M}$ is a $\G$-graded $(A',A)$-bimodule over $\C$. We say that  $\tilde{M}$ induces a $\G$-graded Morita equivalence over $\C$ between $A$ and $A'$, if $\tilde{M}\otimes_{A'}\tilde{M}^{\ast}\simeq A$ as $\G$-graded $(A,A)$-bimodules over $\C$ and if $\tilde{M}^{\ast}\otimes_{A}\tilde{M}\simeq A'$ as $\G$-graded $(A',A')$-bimodules over $\C$.
\end{definition}
\end{subsec}
\medskip
\medskip
\medskip
\medskip
\section{Tensor products}
\begin{subsec}
Consider $G_i$ to be a finite group, $N_i$ to be a normal subgroup of $G_i$ and denote by $\G_i=G_i/N_i$, for all $i\in\set{1,\ldots,n}$. We denote by $$\Gall:=\prod_{i=1}^{n}{\G_i}.$$

\begin{lemma} \label{lemma:tensorProduct_over_C}
Let $A_i$ be $\G_i$-graded algebras and $\C_i$ be $\G_i$-graded $\G_i$-acted algebras, for all $i\in\set{1,\ldots,n}$. The following affirmations hold:
	\begin{enumerate}
		\item [(1)] The tensor product $\Aall:=A_1\otimes\ldots\otimes A_n$ is a $\Gall$-graded algebra;
		\item [(2)] If $A_i$ are $\G_i$-graded crossed products, for all $i\in\set{1,\ldots,n}$, then $\Aall$ is a $\Gall$-graded crossed product;
		\item [(3)] The tensor product $\Call:=\C_1\otimes\ldots\otimes\C_n$ is a $\Gall$-graded $\Gall$-acted algebra;
		\item [(4)] If $A_i$ are $\G_i$-graded algebras over $\C_i$, for all $i\in\set{1,\ldots,n}$, then $\Aall$ is a $\Gall$-graded algebra over $\Call$.
	\end{enumerate}
\end{lemma}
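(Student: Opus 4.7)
The plan is to construct every structure on the tensor products componentwise so that each axiom reduces, on simple tensors and by linear extension, to the corresponding axiom in each factor $A_i$ or $\C_i$. For (1), I would define
\[
\Aall_{(\bar{g}_1,\ldots,\bar{g}_n)} := A_{1,\bar{g}_1}\otimes_{\O}\cdots\otimes_{\O} A_{n,\bar{g}_n},
\]
and observe that $\Aall=\bigoplus_{(\bar{g}_1,\ldots,\bar{g}_n)\in\Gall}\Aall_{(\bar{g}_1,\ldots,\bar{g}_n)}$ follows from distributivity of $\otimes_{\O}$ over $\oplus$, while the inclusion $\Aall_{(\bar{g}_1,\ldots,\bar{g}_n)}\Aall_{(\bar{h}_1,\ldots,\bar{h}_n)}\subseteq\Aall_{(\bar{g}_1\bar{h}_1,\ldots,\bar{g}_n\bar{h}_n)}$ is immediate from the componentwise multiplication in a tensor product of algebras. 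For (2), once (1) is established, if each $A_i$ is a $\G_i$-graded crossed product I pick a unit $u_{i,\bar{g}_i}\in A_{i,\bar{g}_i}$ for every $i$ and every $\bar{g}_i\in\G_i$; then $u_{1,\bar{g}_1}\otimes\cdots\otimes u_{n,\bar{g}_n}$ is a unit in $\Aall_{(\bar{g}_1,\ldots,\bar{g}_n)}$, with inverse the tensor product of the $u_{i,\bar{g}_i}^{-1}$.

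For (3), I would grade $\Call$ in exactly the same fashion and define the $\Gall$-action on simple tensors by
\[
\tensor*[^{(\bar{g}_1,\ldots,\bar{g}_n)}]{(c_1\otimes\cdots\otimes c_n)}{}
:=\tensor*[^{\bar{g}_1}]{c_1}{}\otimes\cdots\otimes\tensor*[^{\bar{g}_n}]{c_n}{},
\]
extended linearly. That this is well defined and gives an action of $\Gall$ by algebra automorphisms follows from the analogous property for each $\G_i$-action on $\C_i$, and the grading-action compatibility condition (3) of the definition of a $\G$-graded $\G$-acted algebra reduces on simple tensors to the same condition applied in each factor $\C_i$.

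For (4), define $\zeta:\Call\to\Aall$ on simple tensors by $\zeta(c_1\otimes\cdots\otimes c_n):=\zeta_1(c_1)\otimes\cdots\otimes\zeta_n(c_n)$, where $\zeta_i:\C_i\to C_{A_i}(B_i)$ are the given structure maps, and extend linearly. After identifying $\Ball=\Aall_{(1,\ldots,1)}=B_1\otimes\cdots\otimes B_n$, the inclusion $\zeta(\Call)\subseteq C_{\Aall}(\Ball)$ amounts to checking that elements of the form $\zeta_1(c_1)\otimes\cdots\otimes\zeta_n(c_n)$ commute with every simple tensor in $\Ball$, which is immediate factorwise and passes to all of $\Ball$ by linearity. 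Preservation of the $\Gall$-grading and compatibility with the $\Gall$-action required by Definition \ref{def:algebraOverC} both follow by inspecting each factor and invoking the corresponding property of $\zeta_i$. I do not expect any genuine obstacle in this lemma; the whole proof is bookkeeping in which every axiom is checked on simple tensors and then extended linearly. The one small point deserving care is the identification $\Ball=B_1\otimes\cdots\otimes B_n$, so that the centralizer condition for $\zeta$ in (4) can be verified on simple tensors rather than on an a priori larger subalgebra.
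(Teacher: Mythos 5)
Your proposal is correct and follows essentially the same route as the paper: the grading, the crossed-product units, the $\Gall$-action, and the structure map $\zeta$ are all defined componentwise on simple tensors exactly as in the paper's proof, with the verifications reducing factorwise. The only difference is that you spell out a few routine checks (distributivity of $\otimes$ over $\oplus$, the identification $\Ball=B_1\otimes\cdots\otimes B_n$) that the paper leaves implicit.
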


\begin{proof}
(1) It is clear that $\Aall$ is a $\Gall$-graded algebra, with the $(g_1,\ldots,g_n)$-component
$$\Aall_{(g_1,\ldots,g_n)}:=A_{1,g_1}\otimes\ldots\otimes A_{n,g_n},$$
where $A_{i,g_i}$ is the $g_i$-component of $A_i$, for all $g_i\in \G_i$ and for all $i$. 

(2) Choose invertible homogeneous elements $u_{i,g}$ in each $A_{i,g}$, for all $g\in \G_i$ and for all $i$. Thus, for each $(g_1,\ldots,g_n)\in \Gall$ the homogeneous element
$$u_{(g_1,\ldots,g_n)}:=u_{1,g_1}\otimes\ldots\otimes u_{n,g_n}\in \Aall_{(g_1,\ldots,g_n)}$$
is clearly invertible.

(3) The $\Gall$-grading of $\Call$ is a given by (1).  The action of $\Gall$ on $\Call$ is defined by
$$\tensor*[^{(g_1,\ldots,g_n)}]{(a_1\otimes\ldots\otimes a_n)}{}:=\tensor*[^{g_1}]{a}{_1}\otimes\ldots\otimes \tensor*[^{g_n}]{a}{_n},$$
for all $(g_1,\ldots,g_n) \in \Gall$ and $a_1\otimes\ldots\otimes a_n\in\Call$.
It is easy too see that  for all $(g_1,\ldots,g_n),(h_1,\ldots,h_n)\in \Gall$ and for all $a_1\otimes\ldots\otimes a_n\in \Call_{(h_1,\ldots,h_n)} $ we have
$$\tensor*[^{(g_1,\ldots,g_n)}]{(a_1\otimes\ldots\otimes a_n)}{}\in \Call_{\tensor*[^{(g_1,\ldots,g_n)}]{(h_1,\ldots,h_n)}{}}.$$

(4) By part (1), the identity component of $\Aall$ is $\Ball=B_1\otimes\ldots\otimes B_n$, where $B_i$ is the identity component of $A_i$, for all $i$.

By the assumptions, we have the $\G_i$-graded $\G_i$-acted structure homomorphisms
$$\zeta_i:\C_i\to C_{A_i}(B_i),$$
for all $i$. We define $\zeta:\Call\to C_{\Aall}(\Ball)$ by
$$\zeta(a_1\otimes\ldots\otimes a_n)=\zeta_1(a_1)\otimes\ldots\otimes\zeta_n(a_n),$$
for all $a_1\otimes\ldots\otimes a_n\in \Call$. It is easy to prove that $\zeta$ verifies the conditions of Definition \ref{def:algebraOverC}.
\end{proof}

\begin{proposition} \label{prop:tensorProduct_Morita}
Assume that $\C_i$ are $\G_i$-graded $\G_i$-acted algebras and that $A_i$ and $A'_i$ are $\G_i$-graded crossed products over $\C_i$, for all $i\in\set{1,\ldots,n}$. If $A_i$ and $A'_i$ are $\G_i$-graded Morita equivalent over $\C_i$, and if $\tilde{M}_i$ is a $\G_i$-graded $(A_i,A'_i)$-bimodule over $\C_i$, that induces the said equivalence, for all $i$, then:
\begin{enumerate}
	\item [(1)] $\Mall:=\tilde{M}_1\otimes\ldots\otimes\tilde{M}_n$ is a $\Gall$-graded $(\Aall,\Apall)$-bimodule over $\Call$, where $\Aall:=A_1\otimes\ldots\otimes A_n$, $\Apall:=A'_1\otimes\ldots\otimes A'_n$ and $\Call:=\C_1\otimes\ldots\otimes\C_n$;
	\item [(2)] $\Mall$ induces a $\Gall$-graded Morita equivalence over $\Call$ between $\Aall$ and $\Apall$.
\end{enumerate}
\end{proposition}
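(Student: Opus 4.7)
My plan is to treat the two parts separately, with part (1) being a direct verification and part (2) reducing to the Morita equivalence hypotheses through a tensor-product-of-duals argument.

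For part (1), I would equip $\Mall$ with the natural $\Gall$-grading
\[
\Mall_{(\bar g_1,\ldots,\bar g_n)}:=\tilde{M}_{1,\bar g_1}\otimes\cdots\otimes\tilde{M}_{n,\bar g_n},
\]
and use the standard $(\Aall,\Apall)$-bimodule structure coming from the componentwise actions. The grading condition $\Aall_{\bar{\mathbf g}}\Mall_{\bar{\mathbf x}}\Apall_{\bar{\mathbf h}}\subseteq\Mall_{\bar{\mathbf g}\bar{\mathbf x}\bar{\mathbf h}}$ is immediate from the corresponding inclusions in each factor. For the compatibility with $\Call$, I would reduce to pure tensors: given $c=c_1\otimes\cdots\otimes c_n\in\Call$ and a homogeneous element $\tilde m_{\bar{\mathbf g}}=\tilde m_{1,\bar g_1}\otimes\cdots\otimes\tilde m_{n,\bar g_n}$, applying the identity $\tilde m_{i,\bar g_i}c_i={}^{\bar g_i}c_i\,\tilde m_{i,\bar g_i}$ factorwise together with the definition of the $\Gall$-action on $\Call$ given in Lemma~\ref{lemma:tensorProduct_over_C}(3) immediately yields $\tilde m_{\bar{\mathbf g}}c={}^{\bar{\mathbf g}}c\,\tilde m_{\bar{\mathbf g}}$. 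Extending by $\Call$-linearity finishes part (1).

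For part (2), the key step is to identify the $\Aall$-dual of $\Mall$. Since each $\tilde M_i$ induces a Morita equivalence, it is a finitely generated projective $A_i$-module (actually a progenerator), so the standard Hom-tensor interchange provides a natural isomorphism
\[
\Mall^{\ast}=\Hom{\Aall}{\Mall}{\Aall}\;\simeq\;\tilde M_1^{\ast}\otimes\cdots\otimes\tilde M_n^{\ast}
\]
of $(\Apall,\Aall)$-bimodules. I would then verify that this isomorphism preserves the $\Gall$-grading (by checking it matches the graded components of each side, which are described componentwise), and that it intertwines the $\Call$-structures on both sides, using the explicit action of $\Call$ through $\zeta$ and the componentwise description from Lemma~\ref{lemma:tensorProduct_over_C}(4). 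This is the step I expect to be the main obstacle, because one must be careful that the left $\Call$-action on $\Mall^{\ast}$ (coming from the right $\Apall$-action on $\Mall$) matches the tensor product of the corresponding $\C_i$-actions on $\tilde M_i^{\ast}$; however, once one works with pure tensors and exploits that each $\zeta_i$ lands in the centralizer, this reduces to a routine factorwise check.

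Having this, I would combine the bimodule associativity of tensor products with the hypotheses $\tilde M_i\otimes_{A'_i}\tilde M_i^{\ast}\simeq A_i$ and $\tilde M_i^{\ast}\otimes_{A_i}\tilde M_i\simeq A'_i$ to obtain
\[
\Mall\otimes_{\Apall}\Mall^{\ast}\;\simeq\;\bigotimes_{i=1}^n\bigl(\tilde M_i\otimes_{A'_i}\tilde M_i^{\ast}\bigr)\;\simeq\;\bigotimes_{i=1}^n A_i=\Aall,
\]
and symmetrically $\Mall^{\ast}\otimes_{\Aall}\Mall\simeq\Apall$. A final check that these isomorphisms are $\Gall$-graded and $\Call$-linear (which follows because on each factor they are $\G_i$-graded and $\C_i$-linear by hypothesis, and the induced isomorphism on the tensor product inherits these properties by its construction on pure tensors) completes the proof of part (2).
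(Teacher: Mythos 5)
Your proposal is correct and follows essentially the same route as the paper: part (1) via the componentwise grading and a pure-tensor check of the $\Call$-compatibility, and part (2) via the identification $\Mall^{\ast}\simeq\tilde M_1^{\ast}\otimes\cdots\otimes\tilde M_n^{\ast}$ followed by rearranging the tensor factors and applying the factorwise equivalences. The only difference is that you spell out justifications (progenerator property, grading and $\Call$-linearity of the isomorphisms) that the paper leaves implicit.
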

\begin{proof}
(1) By Lemma \ref{lemma:tensorProduct_over_C},  $\Aall$ and $\Apall$ are $\Gall$-graded crossed products over $\Call$.
Obviously, $\Mall$ is a $\Gall$-graded  $(\Aall,\Apall)$-bimodule with the $(g_1, \ldots, g_n)$-component
$$\Mall_{(g_1,\ldots,g_n)}:=\tilde{M}_{1,g_1}\otimes\ldots\otimes \tilde{M}_{n,g_n},$$
where $\tilde{M}_{i,g_i}$ is the $g_i$-component of $\tilde{M}_i$, for all $g_i\in \G_i$ and for all $i$. It is also clear that
$$(\tilde{m}_{1,g_1}\otimes\ldots\otimes\tilde{m}_{n,g_n})(c_1\otimes\ldots\otimes c_n) = \tensor*[^{g}]{{(c_1\otimes\ldots\otimes c_n)}}{}(\tilde{m}_{1,g_1}\otimes\ldots\otimes\tilde{m}_{n,g_n}),$$
for all $\tilde{m}_{1,g_1}\otimes\ldots\otimes\tilde{m}_{n,g_n}\in\Mall_{(g_1,\ldots,g_n)}$ and $c_1\otimes\ldots\otimes c_n\in\Call$ and for all $g=(g_1,\ldots,g_n)\in\Gall$. 

(2) It remains to prove that
$$
\Mall\otimes_{\Apall}(\Mall)^{\ast}\simeq \Aall\text{ as }\Gall\text{-graded }(\Aall,\Aall)\text{-bimodules over }\Call,
$$
and that
$$
(\Mall)^{\ast}\otimes_{\Aall}\Mall\simeq \Apall\text{ as }\Gall\text{-graded }(\Apall,\Apall)\text{-bimodules over }\Call.
$$

We will only check the first isomorphism:
$$
\begin{array}{rcl}
\Mall\otimes_{\Apall}(\Mall)^{\ast} & = & (\tilde{M}_1\otimes\ldots\otimes\tilde{M}_n)\otimes_{\Apall}(\tilde{M}_1\otimes\ldots\otimes\tilde{M}_n)^{\ast}\\
 & \simeq & (\tilde{M}_1\otimes\ldots\otimes\tilde{M}_n)\otimes_{\Apall}(\tilde{M}_1^{\ast}\otimes\ldots\otimes\tilde{M}_n^{\ast})\\
 & = & (\tilde{M}_1\otimes\ldots\otimes\tilde{M}_n)\otimes_{A'_1\otimes\ldots\otimes A'_n}(\tilde{M}_1^{\ast}\otimes\ldots\otimes\tilde{M}_n^{\ast})\\
 & \simeq & (\tilde{M}_1\otimes_{A'_1}\tilde{M}_1^{\ast})\otimes\ldots\otimes(\tilde{M}_n\otimes_{A'_n}\tilde{M}_n^{\ast})\\
 & \simeq & A_1\otimes\ldots\otimes A_n \quad = \quad \Aall,
\end{array}
$$
as $\Gall$-graded $(\Aall,\Aall)$-bimodules over $\Call$.
\end{proof}
\end{subsec}
\newpage
\section{Wreath products for algebras} \label{sec:Wreath_products}

Consider the notations from Section \ref{sec:Preliminaries}. We denote $\G^n:=\G\times\ldots\times\G$ ($n$ times). We recall the definition of a wreath product as in \cite[Definition 2.19]{ch:Spath2018} and \cite[Section 5.1.C]{book:Marcus1999}:
\begin{definition}
The wreath product $\G\wr S_n$ is the semidirect product $\G^n\rtimes S_n$, where $S_n$ acts on $\G^n$ (on the left) by permuting the components:
$$\tensor*[^{\sigma}]{{(g_1,\ldots,g_n)}}{}:=(g_{\sigma^{-1}(1)},\ldots,g_{\sigma^{-1}(n)})$$
More exactly, the elements of $\G\wr S_n$ are of the form $((g_1,\ldots,g_n),\sigma)$, and the multiplication is:
$$((g_1,\ldots,g_n),\sigma)((h_1,\ldots,h_n),\tau):=((g_1,\ldots,g_n)\cdot \tensor*[^{\sigma}]{{(h_1,\ldots,h_n)}}{},\sigma\tau),$$
for all $g_1,\ldots,g_n,h_1,\ldots,h_n\in \G$ and $\sigma,\tau\in S_n$.
\end{definition}

\begin{definition}
Let $A$ be an algebra. We denote by $A^{\otimes n}:=A\otimes\ldots\otimes A$ ($n$ times). The wreath product $A\wr S_n$ is 
 $$A\wr S_n := A^{\otimes n}\otimes \O S_n$$
 as $\O$-modules, with multiplication 
$$
\begin{array}{l}
((a_1\otimes\ldots\otimes a_n)\otimes\sigma)((b_1\otimes\ldots\otimes b_n)\otimes\tau)\\
\qquad := ((a_1\otimes\ldots\otimes a_n)\cdot\tensor[^{\sigma}]{(b_1\otimes\ldots\otimes b_n)}{})\otimes (\sigma\tau),
\end{array}
$$
where
$$\tensor[^{\sigma}]{(b_1\otimes\ldots\otimes b_n)}{}:=b_{\sigma^{-1}(1)}\otimes\ldots\otimes b_{\sigma^{-1}(n)},$$
for all $(a_1\otimes\ldots\otimes a_n)\otimes\sigma$, $(b_1\otimes\ldots\otimes b_n)\otimes\tau\in A\wr S_n$.
\end{definition}

\begin{lemma} \label{lemma:WR_algebra}
Let $A$ be a $\G$-graded algebra and $\C$ be a $\G$-graded $\G$-acted algebra. The following affirmations hold:
\begin{enumerate}
	\item [(1)] $A\wr S_n$ is a $\bar{G}\wr S_n$-graded algebra;
	\item [(2)] If $A$ is a $\G$-graded crossed product, then $A\wr S_n$ is a $\bar{G}\wr S_n$-graded crossed product;
	\item [(3)] $\C^{\otimes n}$ is a $\G\wr S_n$-acted $\G^n$-graded algebra;
	\item [(4)] If $A$ is a $\G$-graded algebra over $\C$, then $A\wr S_n$ is a $\G\wr S_n$-graded algebra over $\C^{\otimes n}$.
\end{enumerate}
\end{lemma}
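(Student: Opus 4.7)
The plan is to handle the four parts in order, since each builds on the previous. For (1), I would define the $\G\wr S_n$-grading by
\[
(A\wr S_n)_{((g_1,\ldots,g_n),\sigma)} := (A_{g_1}\otimes\cdots\otimes A_{g_n})\otimes \O\sigma,
\]
and verify that the wreath product multiplication respects it: the $i$-th tensor factor $a_ib_{\sigma^{-1}(i)}$ of a product of homogeneous elements lies in $A_{g_ih_{\sigma^{-1}(i)}}$, which is exactly the $i$-th coordinate of $((g_1,\ldots,g_n),\sigma)((h_1,\ldots,h_n),\tau)$. For (2), I would choose invertible homogeneous $u_g\in A_g$ and check, using the inversion formula $((g_1,\ldots,g_n),\sigma)^{-1}=((g_{\sigma(1)}^{-1},\ldots,g_{\sigma(n)}^{-1}),\sigma^{-1})$ in $\G\wr S_n$, that $(u_{g_1}\otimes\cdots\otimes u_{g_n})\otimes\sigma$ admits the two-sided inverse $(u_{g_{\sigma(1)}}^{-1}\otimes\cdots\otimes u_{g_{\sigma(n)}}^{-1})\otimes\sigma^{-1}$ by direct computation.

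For (3), the $\G^n$-grading on $\C^{\otimes n}$ is supplied by Lemma \ref{lemma:tensorProduct_over_C}(1). I would define the $\G\wr S_n$-action by combining the componentwise action of Lemma \ref{lemma:tensorProduct_over_C}(3) with the $S_n$-permutation of tensor factors,
\[
\tensor*[^{((g_1,\ldots,g_n),\sigma)}]{(c_1\otimes\cdots\otimes c_n)}{} := \tensor*[^{g_1}]{c}{_{\sigma^{-1}(1)}}\otimes\cdots\otimes\tensor*[^{g_n}]{c}{_{\sigma^{-1}(n)}},
\]
and verify the axioms of a left action by direct use of the wreath multiplication rule. Grading compatibility then follows because $\tensor*[^{g_i}]{c}{_{\sigma^{-1}(i)}}\in \C_{g_ih_{\sigma^{-1}(i)}g_i^{-1}}$ whenever $c_j\in\C_{h_j}$, and this matches the conjugation action of $((g_1,\ldots,g_n),\sigma)$ on $(h_1,\ldots,h_n)\in \G^n\triangleleft \G\wr S_n$.

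For (4), I would view $\C^{\otimes n}$ as $\G\wr S_n$-graded via the inclusion $\G^n\hookrightarrow\G\wr S_n$, and define
\[
\zeta^{\wr}(c_1\otimes\cdots\otimes c_n):=(\zeta(c_1)\otimes\cdots\otimes\zeta(c_n))\otimes 1_{S_n}.
\]
The image lies in $C_{A\wr S_n}(B^{\otimes n}\otimes 1_{S_n})$ because each $\zeta(c_i)$ centralizes $B$ in $A$, and grading preservation is immediate from the gradedness of $\zeta$. The hardest step --- and the real obstacle --- is the $\G\wr S_n$-equivariance of $\zeta^{\wr}$: I would first show that conjugation in $A\wr S_n$ by the invertible element $u_{((g_1,\ldots,g_n),\sigma)}$ constructed in (2) restricts, on elements of the form $(x_1\otimes\cdots\otimes x_n)\otimes 1_{S_n}$, to the action defined in (3). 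This comes down to the telescoping identity $\tensor*[^{\sigma}]{(u_{g_{\sigma(1)}}^{-1}\otimes\cdots\otimes u_{g_{\sigma(n)}}^{-1})}{}=u_{g_1}^{-1}\otimes\cdots\otimes u_{g_n}^{-1}$, which lets the conjugation collapse to the claimed formula. Equivariance of $\zeta^{\wr}$ then reduces to the componentwise $\G$-equivariance of $\zeta$.
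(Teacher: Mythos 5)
Your proposal is correct and follows essentially the same route as the paper: the same component $(A_{g_1}\otimes\cdots\otimes A_{g_n})\otimes\O\sigma$, the same homogeneous units with inverse $(u_{g_{\sigma(1)}}^{-1}\otimes\cdots\otimes u_{g_{\sigma(n)}}^{-1})\otimes\sigma^{-1}$, the same permuted-componentwise action on $\C^{\otimes n}$, and the same structure map $\zeta^{\otimes n}$ composed with the inclusion $C_A(B)^{\otimes n}\subseteq C_{A\wr S_n}(B^{\otimes n})$. The only (harmless) difference is in part (4), where you make explicit that the $\G\wr S_n$-action on the centralizer is conjugation by the units from (2) before reducing to the componentwise equivariance of $\zeta$, whereas the paper verifies the equivariance identity by a direct symbolic computation.
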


\begin{proof}
(1)  The $((g_1,\ldots,g_n),\sigma)$-component of $A\wr S_n$ is
$$(A\wr S_n)_{((g_1,\ldots,g_n),\sigma)}:=(A_{g_1}\otimes\ldots\otimes A_{g_n})\otimes\O\sigma,$$
for each $((g_1,\ldots,g_n),\sigma)\in \bar{G}\wr S_n$. Indeed,
\begingroup
\allowdisplaybreaks
\begin{align*}
   &(A\wr S_n)_{((g_1,\ldots,g_n),\sigma)}(A\wr S_n)_{((h_1,\ldots,h_n),\tau)}\\
 &\qquad =  ((A_{g_1}\otimes\ldots\otimes A_{g_n})\otimes\O\sigma)((A_{h_1}\otimes\ldots\otimes A_{h_n})\otimes\O\tau)\\
 &\qquad =  ((A_{g_1}\otimes\ldots\otimes A_{g_n})\cdot \tensor[^{\sigma}]{{(A_{h_1}\otimes\ldots\otimes A_{h_n})}}{})\otimes(\O\sigma\otimes\O\tau)\\
 &\qquad =  ((A_{g_1}\otimes\ldots\otimes A_{g_n})\cdot (A_{h_{\sigma^{-1}{(1)}}}\otimes\ldots\otimes A_{h_{\sigma^{-1}{(n)}}}))\otimes\O(\sigma\tau)\\
 &\qquad =  (A_{g_1}A_{h_{\sigma^{-1}{(1)}}}\otimes\ldots\otimes A_{g_n}A_{h_{\sigma^{-1}{(n)}}})\otimes\O(\sigma\tau)\\
 &\qquad \subseteq  (A_{g_1 h_{\sigma^{-1}{(1)}}}\otimes\ldots\otimes A_{g_n h_{\sigma^{-1}{(n)}}})\otimes\O(\sigma\tau)\\
 &\qquad =  (A\wr S_n)_{(((g_1,\ldots,g_n)\cdot \tensor*[^{\sigma}]{{(h_1,\ldots,h_n)}}{}),\sigma\tau)}\\
 &\qquad =  (A\wr S_n)_{((g_1,\ldots,g_n),\sigma)((h_1,\ldots,h_n),\tau)}.
\end{align*}
\endgroup

(2) We  choose invertible homogeneous elements $u_g\in A_g$ for all $g\in\G$. For  $((g_1,\ldots,g_n),\sigma)\in \G\wr S_n$ the  homogeneous element
$$u_{((g_1,\ldots,g_n),\sigma)}:=(u_{g_1}\otimes\ldots\otimes u_{g_n})\otimes\sigma,$$
is clearly invertible, with
$$u^{-1}_{((g_1,\ldots,g_n),\sigma)}:=(u^{-1}_{g_{\sigma(1)}}\otimes\ldots\otimes u^{-1}_{g_{\sigma(n)}})\otimes\sigma^{-1}.$$

(3) By Lemma \ref{lemma:tensorProduct_over_C}, we know that $\C^{\otimes n}$ is a $\G^n$-graded algebra. It remains to prove that it is $\G\wr S_n$-acted and that the action is compatible with the gradings. We define the action of $\G\wr S_n$ on $\C^{\otimes n}$ as follows:
$$
\begin{array}{l}
\tensor*[^{{((g_1,\ldots,g_n),\sigma)}}]{{(c_1\otimes\ldots\otimes c_n)}}{}:=\tensor*[^{{g_{1}}}]{{c_{\sigma^{-1}(1)}}}{}\otimes\ldots\otimes\tensor*[^{{g_{n}}}]{{c_{\sigma^{-1}(n)}}}{},
\end{array}
$$
where $((g_1,\ldots,g_n),\sigma)\in \G\wr S_n$ and $c_1\otimes\ldots\otimes c_n\in \C^{\otimes n}$. We have:
$$
\begin{array}{rcl}
\tensor*[^{{((1_{\G},\ldots,1_{\G}),e)}}]{{(a_1\otimes\ldots\otimes a_n)}}{} & = & \tensor*[^{{1_{\G}}}]{{a_1}}{}\otimes\ldots\otimes\tensor*[^{{1_{\G}}}]{{a_n}}{}\\
& = & a_1\otimes\ldots\otimes a_n,\\
\end{array}
$$
$$
\begin{array}{l}
\tensor*[^{{(((g_1,\ldots,g_n),\sigma)((h_1,\ldots,h_n),\tau))}}]{{(a_1\otimes\ldots\otimes a_n)}}{}\\
\qquad = \tensor*[^{{((g_1,\ldots,g_n)\cdot \tensor*[^{\sigma}]{{(h_1,\ldots,h_n)}}{},\sigma\tau)}}]{{(a_1\otimes\ldots\otimes a_n)}}{}\\
\qquad = \tensor*[^{{((g_1,\ldots,g_n)\cdot (h_{\sigma^{-1}(1)},\ldots,h_{\sigma^{-1}(n)}),\sigma\tau)}}]{{(a_1\otimes\ldots\otimes a_n)}}{}\\
\qquad = \tensor*[^{{((g_1 h_{\sigma^{-1}(1)},\ldots,g_n h_{\sigma^{-1}(n)}),\sigma\tau)}}]{{(a_1\otimes\ldots\otimes a_n)}}{}\\
\qquad = \tensor*[^{{g_1 h_{\sigma^{-1}(1)}}}]{{a_{(\sigma\tau)^{-1}(1)}}}{}\otimes\ldots\otimes\tensor*[^{{g_n h_{\sigma^{-1}(n)}}}]{{a_{(\sigma\tau)^{-1}(n)}}}{}\\
\qquad = \tensor*[^{{g_1 h_{\sigma^{-1}(1)}}}]{{a_{\tau^{-1}(\sigma^{-1}(1))}}}{}\otimes\ldots\otimes\tensor*[^{{g_n h_{\sigma^{-1}(n)}}}]{{a_{\tau^{-1}(\sigma^{-1}(n))}}}{}\\
\qquad = \tensor*[^{{((g_1,\ldots,g_n),\sigma)}}]{{(\tensor*[^{{h_{1}}}]{{a_{\tau^{-1}(1)}}}{}\otimes\ldots\otimes\tensor*[^{{h_{n}}}]{{a_{\tau^{-1}(n)}}}{})}}{}\\
\qquad = \tensor*[^{{((g_1,\ldots,g_n),\sigma)}}]{{\left(\tensor*[^{{((h_1,\ldots,h_n),\tau)}}]{{(a_1\otimes\ldots\otimes a_n)}}{}\right)}}{},
\end{array}
$$
$$
\begin{array}{l}
\tensor*[^{{((g_1,\ldots,g_n),\sigma)}}]{{((a_1\otimes\ldots\otimes a_n)\cdot(b_1\otimes\ldots\otimes b_n))}}{}\\
\qquad = \tensor*[^{{((g_1,\ldots,g_n),\sigma)}}]{{(a_1 b_1\otimes\ldots\otimes a_n b_n)}}{}\\
\qquad = \tensor*[^{g_{1}}]{(a_{\sigma^{-1}(1)} b_{\sigma^{-1}(1)})}{}\otimes\ldots\otimes \tensor*[^{g_{n}}]{(a_{\sigma^{-1}(n)} b_{\sigma^{-1}(n)})}{}\\
\qquad = \tensor*[^{g_{1}}]{{a_{\sigma^{-1}(1)}}}{}\cdot\tensor*[^{g_{1}}]{{b_{\sigma^{-1}(1)}}}{}\otimes\ldots\otimes \tensor*[^{g_{n}}]{{a_{\sigma^{-1}(n)}}}{}\cdot\tensor*[^{g_{n}}]{{b_{\sigma^{-1}(n)}}}{}\\
\qquad = (\tensor*[^{g_{1}}]{{a_{\sigma^{-1}(1)}}}{}\otimes\ldots\otimes \tensor*[^{g_{n}}]{{a_{\sigma^{-1}(n)}}}{})(\tensor*[^{g_{1}}]{{b_{\sigma^{-1}(1)}}}{}\otimes\ldots\otimes\tensor*[^{g_{n}}]{{b_{\sigma^{-1}(n)}}}{})\\
\qquad = \tensor*[^{{((g_1,\ldots,g_n),\sigma)}}]{{(a_1\otimes\ldots\otimes a_n)}}{}\cdot \tensor*[^{{((g_1,\ldots,g_n),\sigma)}}]{{(b_1\otimes\ldots\otimes b_n)}}{},
\end{array}
$$
and
$$
\begin{array}{rcl}
\tensor*[^{{((h_1,\ldots,h_n),\tau)}}]{{(c_1\otimes\ldots\otimes c_n)}}{} & = & \tensor*[^{{h_{1}}}]{{c_{\tau^{-1}(1)}}}{}\otimes\ldots\otimes\tensor*[^{{h_{n}}}]{{c_{\tau^{-1}(n)}}}{}\\
& \in & \C_{\tensor*[^{{h_{1}}}]{{g_{\tau^{-1}(1)}}}{}}\otimes\ldots\otimes\C_{\tensor*[^{{h_{n}}}]{{g_{\tau^{-1}(n)}}}{}}\\
& = & (\C^{\otimes n})_{(\tensor*[^{{h_{1}}}]{{g_{\tau^{-1}(1)}}}{},\ldots,\tensor*[^{{h_{n}}}]{{g_{\tau^{-1}(n)}}}{})}\\
& = & (\C^{\otimes n})_{\tensor*[^{{((h_1,\ldots,h_n),\tau)}}]{{(g_1,\ldots,g_n)}}{}},
\end{array}
$$
for all $a_1\otimes\ldots\otimes a_n$, $b_1\otimes\ldots\otimes b_n \in \C^{\otimes n}$, for all $c_1\otimes\ldots\otimes c_n \in \C^{\otimes n}_{(g_1,\ldots,g_n)}$ and for all $((g_1,\ldots,g_n),\sigma)$, $((h_1,\ldots,h_n),\tau)\in \G\wr S_n$.

(4) By assumption, there exists a $\G$-graded $\G$-acted algebra homomorphism $\zeta:\C\to C_A(B),$
where $B$ is the identity component of $A$. Henceforth, we have a $\G^{n}$-graded $\G^{n}$-acted algebra homomorphism:
$$\zeta^{\otimes n}:\C^{\otimes n}\to C_{A}(B)^{\otimes n}.$$
Now, via the identification: $$A^{\otimes n}\ni a_1\otimes\ldots\otimes a_n=(a_1\otimes\ldots\otimes a_n)\otimes e\in A\wr S_n,$$
we clearly have the following inclusion:
$$C_A(B)^{\otimes n}\subseteq C_{A\wr S_n}(B^{\otimes n})=C_{A\wr S_n}((A\wr S_n)_{((1_{\G},\ldots,1_{\G}), e)}).$$
Therefore, we obtain the required $\G\wr S_n$-graded $\G\wr S_n$-acted algebra map
$$\zeta_{\mathrm{wr}}:\C^{\otimes n}\to C_{A\wr S_n}((A\wr S_n)_{((1_{\G},\ldots,1_{\G}),e)}).$$
Indeed, given $((g_1,\ldots,g_n),\sigma)\in \G\wr S_n$ and $c_1\otimes\ldots\otimes c_n\in \C^{\otimes n}$ we have:
$$
\begin{array}{rcl}
\zeta_{\mathrm{wr}}(\tensor*[^{{((g_1,\ldots,g_n),\sigma)}}]{{(c_1\otimes\ldots\otimes c_n)}}{}) & = & \zeta_{\mathrm{wr}}(\tensor*[^{{g_{1}}}]{{c_{\sigma^{-1}(1)}}}{}\otimes\ldots\otimes \tensor*[^{{g_{n}}}]{{c_{\sigma^{-1}(n)}}}{})\\
 & = & \zeta(\tensor*[^{{g_{1}}}]{{c_{\sigma^{-1}(1)}}}{})\otimes\ldots\otimes \zeta(\tensor*[^{{g_{n}}}]{{c_{\sigma^{-1}(n)}}}{})\\
 & = & \tensor*[^{{g_{1}}}]{{\zeta(c_{\sigma^{-1}(1)})}}{}\otimes\ldots\otimes \tensor*[^{{g_{n}}}]{{\zeta(c_{\sigma^{-1}(n)})}}{}\\
 & = & \tensor*[^{{((g_1,\ldots,g_n),\sigma)}}]{{(\zeta(c_1)\otimes\ldots\otimes \zeta(c_n))}}{}\\
 & = & \tensor*[^{{((g_1,\ldots,g_n),\sigma)}}]{{\zeta_{\mathrm{wr}}(c_1\otimes\ldots\otimes c_n)}}{}.
\end{array}
$$
Henceforth, $A\wr S_n$ is a $\G\wr S_n$-graded algebra over $\C^{\otimes n}$.
\end{proof}
\medskip
\medskip
\medskip
\medskip
\section{Morita equivalences for wreath products}

Consider the notations from Section \ref{sec:Preliminaries} and Section \ref{sec:Wreath_products}. We recall the definition of a wreath product between a module and $S_n$.

\begin{definition}
Let $A$ and $A'$ be two algebras. Assume that $\tilde{M}$ is an $(A,A')$-bimodule. The wreath product $\tilde{M}\wr S_n$ is defined by
$$\tilde{M}\wr S_n:=\tilde{M}^{\otimes n}\otimes \O S_n$$
 as $\O$-modules, with  operations
$$
\begin{array}{l}
((a_1\otimes\ldots\otimes a_n)\otimes\sigma)((\tilde{m}_1\otimes\ldots\otimes \tilde{m}_n)\otimes\tau)\\
\qquad := ((a_1\otimes\ldots\otimes a_n)\cdot\tensor[^{\sigma}]{(\tilde{m}_1\otimes\ldots\otimes \tilde{m}_n)}{})\otimes(\sigma\tau),
\end{array}
$$
and
$$
\begin{array}{l}
((\tilde{m}_1\otimes\ldots\otimes \tilde{m}_n)\otimes\tau)((a'_1\otimes\ldots\otimes a'_n)\otimes\pi)\\
\qquad := ((\tilde{m}_1\otimes\ldots\otimes \tilde{m}_n)\cdot\tensor[^{\tau}]{(a'_1\otimes\ldots\otimes a'_n)}{})\otimes(\tau\pi),
\end{array}
$$
where
$$\tensor[^{\sigma}]{(\tilde{m}_1\otimes\ldots\otimes \tilde{m}_n)}{}:=\tilde{m}_{\sigma^{-1}(1)}\otimes\ldots\otimes \tilde{m}_{\sigma^{-1}(n)},$$
for all $(a_1\otimes\ldots\otimes a_n)\otimes\sigma\in A\wr S_n$, $(\tilde{m}_1\otimes\ldots\otimes \tilde{m}_n)\otimes\tau\in \tilde{M}\wr S_n$ and $(a'_1\otimes\ldots\otimes a'_n)\otimes\pi \in A'\wr S_n$.
\end{definition}

\begin{subsec}
Let $\C$ be a $\G$-graded $\G$-acted algebra and $A$ and $A'$ be two $\G$-graded crossed products over $\C$, with identity components $B$ and $B'$ respectively.

If $\tilde{M}$ is an $(A,A')$-bimodule which induces a Morita equivalence between $A$ and $A'$, by the results of   \cite[Section 5.1.C]{book:Marcus1999},  we already know that $\tilde{M}\wr S_n$ induces a Morita equivalence between $A\wr S_n$ and $A'\wr S_n$. The question that arises  is whether this result can be extended to give a graded Morita equivalence over a group graded group acted algebra.
\end{subsec}
\newpage
\begin{theorem} \label{th:main}
Let $\tilde{M}$ be a $\G$-graded $(A,A')$-bimodule over $\C$. Then, the following affirmations hold:
	\begin{enumerate}
		\item [(1)] $\tilde{M}\wr S_n$ is a $\G\wr S_n$-graded $(A\wr S_n,A'\wr S_n)$-bimodule over $\C^{\otimes n}$;
		\item [(2)] $(A\wr S_n)\otimes_{B^{\otimes n}}M^{\otimes n}\simeq M^{\otimes n}\otimes_{B'^{\otimes n}}(A'\wr S_n)\simeq \tilde{M}\wr S_n$ as $\G\wr S_n$-graded $(A\wr S_n,A'\wr S_n)$-bimodules over $\C^{\otimes n}$, where $M$ is the identity component of $\tilde{M}$;
		\item [(3)] If $\tilde{M}$ induces a $\G$-graded Morita equivalence over $\C$ between $A$ and $A'$, then $\tilde{M}\wr S_n$ induces a $\G\wr S_n$-graded Morita equivalence over $\C^{\otimes n}$ between $A\wr S_n$ and $A'\wr S_n$.
	\end{enumerate}
\end{theorem}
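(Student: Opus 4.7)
The plan is to prove the three parts in order. For part (1), I would define the $\G\wr S_n$-grading on $\tilde{M}\wr S_n$ componentwise by
$$(\tilde{M}\wr S_n)_{((g_1,\ldots,g_n),\sigma)} := (\tilde{M}_{g_1}\otimes\ldots\otimes\tilde{M}_{g_n})\otimes\O\sigma,$$
after which compatibility with the $(A\wr S_n,A'\wr S_n)$-bimodule structure follows from an argument essentially identical to the one in Lemma \ref{lemma:WR_algebra}(1), with the middle $A_{h_i}$ factors replaced by $\tilde{M}_{h_i}$. The commutation relation $\tilde{m}_{\bar g}c = \tensor*[^{\bar{g}}]{c}{}\tilde{m}_{\bar g}$ for $c\in\C^{\otimes n}$ is inherited tensor-factor-wise from the corresponding relation for $\tilde{M}$ over $\C$, combined with the explicit $\G\wr S_n$-action on $\C^{\otimes n}$ set up in Lemma \ref{lemma:WR_algebra}(3), exactly as in the proof of Lemma \ref{lemma:WR_algebra}(4).

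For part (2), I would use that since $A$ and $A'$ are crossed products and $\tilde{M}$ is a graded bimodule over them, the canonical multiplication maps $A\otimes_B M\to \tilde{M}$ and $M\otimes_{B'}A'\to \tilde{M}$ are isomorphisms; their $n$-th tensor powers give
$$A^{\otimes n}\otimes_{B^{\otimes n}} M^{\otimes n}\simeq \tilde{M}^{\otimes n}\simeq M^{\otimes n}\otimes_{B'^{\otimes n}} A'^{\otimes n}.$$
The second isomorphism in (2) is then immediate, since the left action of $B'^{\otimes n}=B'^{\otimes n}\otimes e$ on $A'\wr S_n = A'^{\otimes n}\otimes \O S_n$ involves no $S_n$-twist. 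For the first isomorphism I would construct the explicit map $(a\otimes\sigma)\otimes m\mapsto (a\cdot\tensor*[^{\sigma}]{m}{})\otimes\sigma$, verify compatibility with the twisted right action $(a\otimes\sigma)\cdot b = a\cdot\tensor*[^{\sigma}]{b}{}\otimes\sigma$ of $B^{\otimes n}$ on $A\wr S_n$, and check bijectivity and preservation of all three structural layers.

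For part (3), the strategy is to reduce to Proposition \ref{prop:tensorProduct_Morita}, which applied to $n$ equal copies of the data yields that $\tilde{M}^{\otimes n}$ induces a $\G^n$-graded Morita equivalence over $\C^{\otimes n}$ between $A^{\otimes n}$ and $A'^{\otimes n}$; in particular $\tilde{M}^{\otimes n}\otimes_{A'^{\otimes n}}(\tilde{M}^{\otimes n})^{\ast}\simeq A^{\otimes n}$. Writing $\tilde{N}:=\tilde{M}^{\ast}$, I would first identify $(\tilde{M}\wr S_n)^{\ast}\simeq \tilde{N}\wr S_n$ by combining part (2) with the Hom-tensor adjunction
$$\Hom{A\wr S_n}{(A\wr S_n)\otimes_{B^{\otimes n}}M^{\otimes n}}{A\wr S_n}\simeq \Hom{B^{\otimes n}}{M^{\otimes n}}{A\wr S_n}\simeq \tilde{N}^{\otimes n}\otimes\O S_n.$$
Then I would compute $(\tilde{M}\wr S_n)\otimes_{A'\wr S_n}(\tilde{N}\wr S_n)$ by applying the identity $(\tilde{m}\otimes\sigma)\otimes(\tilde{n}\otimes\tau) = (\tilde{m}\otimes 1)\otimes(\tensor*[^{\sigma}]{\tilde{n}}{}\otimes\sigma\tau)$ to reduce to a tensor product over $A'^{\otimes n}\otimes e$; the latter evaluates to $A^{\otimes n}$ by the $\G^n$-graded Morita equivalence of tensor factors, so reassembly with $\O S_n$ yields $A\wr S_n$. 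The reverse composition is symmetric. The main obstacle throughout is bookkeeping the $S_n$-twisting consistently across all three structural layers (bimodule, $\G\wr S_n$-grading, and $\C^{\otimes n}$-commutation): making sure the permutation induced by $\sigma\in S_n$ on module tensor factors matches the corresponding permutations on the algebra side is where index errors are most likely to arise, and what forces repeated use of the explicit action formula from Lemma \ref{lemma:WR_algebra}(3).
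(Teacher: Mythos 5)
Your parts (1) and (2) follow essentially the paper's own route: the same componentwise grading $(\tilde{M}\wr S_n)_{((g_1,\ldots,g_n),\sigma)}=(\tilde{M}_{g_1}\otimes\ldots\otimes\tilde{M}_{g_n})\otimes\O\sigma$, the same index-chasing verification of $A_{\bar g}\tilde M_{\bar x}A'_{\bar h}\subseteq\tilde M_{\bar g\bar x\bar h}$ and of the $\C^{\otimes n}$-commutation relation, and the same explicit map $(a\otimes\sigma)\otimes m\mapsto(a\cdot\tensor*[^{\sigma}]{m}{})\otimes\sigma$ in (2). (The paper grounds the well-definedness of the bimodule structure in (1) on the diagonal subalgebra $\Delta_{S_n}(A\wr S_n\otimes(A'\wr S_n)\op)\simeq(A^{\otimes n}\otimes(A'^{\otimes n})\op)\otimes\O S_n$ together with a lemma of Marcus, and obtains the isomorphisms of (2) abstractly from a $\Delta$-module result of \cite{art:MM2} before verifying them by hand; your appeal to $A\otimes_BM\simeq\tilde M$ for crossed products and its $n$-th tensor power is an acceptable substitute.) Where you genuinely diverge is part (3). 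The paper never computes the Morita context for the wreath products: it applies Proposition \ref{prop:tensorProduct_Morita} to get the $\G^n$-graded equivalence induced by $\tilde M^{\otimes n}$, descends to a Morita equivalence between the identity components $B^{\otimes n}$ and $B'^{\otimes n}$, and then invokes a lifting theorem (\cite[Theorem 3.3]{art:MM2}) with respect to the $\G\wr S_n$-grading, which promotes a Morita equivalence of identity components carried by $(\tilde M\wr S_n)_1=M^{\otimes n}$ to a graded Morita equivalence over $\C^{\otimes n}$ of the full crossed products --- part (2) supplies exactly the hypothesis that theorem needs. Your route instead identifies $(\tilde M\wr S_n)^{\ast}\simeq\tilde M^{\ast}\wr S_n$ via Hom-tensor adjunction and pushes the $S_n$-twist across $\otimes_{A'\wr S_n}$ to reduce to $\tilde M^{\otimes n}\otimes_{A'^{\otimes n}}(\tilde M^{\ast})^{\otimes n}\simeq A^{\otimes n}$. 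This is workable and has the virtue of being self-contained (no appeal to the external lifting theorem), but it is also where the real work hides: you must check that the evaluation map $\tilde M^{\otimes n}\otimes_{A'^{\otimes n}}(\tilde M^{\ast})^{\otimes n}\to A^{\otimes n}$ is $S_n$-equivariant so that your reduction $(\tilde{m}\otimes\sigma)\otimes(\tilde{n}\otimes\tau)=(\tilde{m}\otimes e)\otimes(\tensor*[^{\sigma}]{\tilde{n}}{}\otimes\sigma\tau)$ respects the full $A'\wr S_n$-balancing, and that the resulting isomorphism preserves both the $\G\wr S_n$-grading and the $\C^{\otimes n}$-structure; the paper's citation buys precisely this bookkeeping for free, at the cost of depending on \cite{art:MM2}.
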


\begin{proof}
(1) By Lemma \ref{lemma:WR_algebra}, we know that $A\wr S_n$ and $A'\wr S_n$ are $\G\wr S_n$-graded crossed products over $\C^{\otimes n}$.

It is also known that $A\wr S_n$ and $A'\wr S_n$ are strongly $S_n$-graded algebras, and given this grading, if we denote
$$\Delta_{S_n}(A\wr S_n\otimes (A'\wr S_n)^{\mathrm{op}}):=(A\wr S_n\otimes (A'\wr S_n)^{\mathrm{op}})_{\delta(S_n)},$$
where $\delta(S_n):=\set{(\sigma,\sigma^{-1})\mid \sigma\in S_n}$, we have the following isomorphism of algebras:
$$\Delta_{S_n}(A\wr S_n\otimes (A'\wr S_n)^{\mathrm{op}})\simeq (A^{\otimes n}\otimes (A'^{\otimes n})^{\mathrm{op}})\otimes \O S_n.$$
Moreover,  \cite[Lemma 5.1.19]{book:Marcus1999} states that $\tilde{M}^{\otimes n}$ is a left $\O S_n$-module with the action given by permutations. Henceforth, it is easy to see that $\tilde{M}^{\otimes n}$ is a $(A^{\otimes n}\otimes (A'^{\otimes n})^{\mathrm{op}})\otimes \O S_n$-module, and thereby (the above isomorphism), $\tilde{M}^{\otimes n}$ extends to a $\Delta_{S_n}(A\wr S_n\otimes (A'\wr S_n)^{\mathrm{op}})$-module. Thus, $\tilde{M}\wr S_n:=\tilde{M}^{\otimes n}\otimes \O S_n$ becomes an $(A\wr S_n,A'\wr S_n)$-bimodule.

Now, we will prove that $\tilde{M}\wr S_n$ is a $\G\wr S_n$-graded $(A\wr S_n,A'\wr S_n)$-bimodule. Indeed, for all $((g_1,\ldots,g_n),\sigma)\in \G\wr S_n$, the $((g_1,\ldots,g_n),\sigma)$-component of $\tilde{M}\wr S_n$ is:
$$
(\tilde{M}\wr S_n)_{((g_1,\ldots,g_n),\sigma)}:=(\tilde{M}_{g_1}\otimes\ldots\otimes\tilde{M}_{g_n})\otimes\O\sigma.
$$
The verification for this definition is straightforward, as follows. For each $((g_1,\ldots,g_n),\sigma)$, $((x_1,\ldots,x_n),\pi)$ and $((h_1,\ldots,h_n),\tau)\in \G\wr S_n$ we have:\\
$
(A\wr S_n)_{((g_1,\ldots,g_n),\sigma)}(\tilde{M}\wr S_n)_{((x_1,\ldots,x_n),\pi)}(A'\wr S_n)_{((h_1,\ldots,h_n),\tau)}\\=
((A_{g_1}\otimes\ldots\otimes A_{g_n})\otimes\O\sigma)((\tilde{M}_{x_1}\otimes\ldots\otimes\tilde{M}_{x_n})\otimes\O\pi)((A'_{h_1}\otimes\ldots\otimes A'_{h_n})\otimes\O\tau)\\
=((A_{g_1}\tilde{M}_{x_{\sigma^{-1}(1)}}\otimes\ldots\otimes A_{g_n}\tilde{M}_{x_{\sigma^{-1}(n)}})\otimes\O(\sigma\pi))((A'_{h_1}\otimes\ldots\otimes A'_{h_n})\otimes\O\tau)\\
=(A_{g_1}\tilde{M}_{x_{\sigma^{-1}(1)}}A'_{h_{(\sigma\pi)^{-1}(1)}}\otimes\ldots\otimes A_{g_n}\tilde{M}_{x_{\sigma^{-1}(n)}}A'_{h_{(\sigma\pi)^{-1}(n)}})\otimes\O(\sigma\pi\tau)\\
\subseteq(\tilde{M}_{g_1 x_{\sigma^{-1}(1)} h_{(\sigma\pi)^{-1}(1)}}\otimes\ldots\otimes \tilde{M}_{g_n x_{\sigma^{-1}(n)} h_{(\sigma\pi)^{-1}(n)}})\otimes\O(\sigma\pi\tau)\\
=(\tilde{M}\wr S_n)_{((g_1 x_{\sigma^{-1}(1)} h_{(\sigma\pi)^{-1}(1)},\ldots, g_n x_{\sigma^{-1}(n)} h_{(\sigma\pi)^{-1}(n)}),\sigma\pi\tau)}\\
=(\tilde{M}\wr S_n)_{((g_1 x_{\sigma^{-1}(1)},\ldots, g_n x_{\sigma^{-1}(n)}),\sigma\pi)((h_{1},\ldots, h_{n}),\tau)}\\
=(\tilde{M}\wr S_n)_{((g_1,\ldots, g_n),\sigma)((x_{1},\ldots, x_{n}),\pi)((h_{1},\ldots, h_{n}),\tau)}.
$\\
Therefore, $\tilde{M}\wr S_n$ is a $\G\wr S_n$-graded $(A\wr S_n,A'\wr S_n)$-bimodule. Note that the identity component of $\tilde{M}\wr S_n$ (with respect to the $\G\wr S_n$-grading) is $$(\tilde{M}\wr S_n)_1=M^{\otimes n},$$
where $M$ is the identity component of $\tilde{M}$.

Finally, we will prove that
$$
\begin{array}{l}
((\tilde{m}_{g_1}\otimes\ldots\otimes\tilde{m}_{g_n})\otimes \sigma)(c_1\otimes\ldots\otimes c_n)\\
\qquad\qquad = \tensor*[^{((g_1,\ldots,g_n),\sigma)}]{{(c_1\otimes\ldots\otimes c_n)}}{}((\tilde{m}_{g_1}\otimes\ldots\otimes\tilde{m}_{g_n})\otimes \sigma),
\end{array}
$$
for all $(\tilde{m}_{g_1}\otimes\ldots\otimes\tilde{m}_{g_n})\otimes \sigma\in(\tilde{M}\wr S_n)_{((g_1,\ldots,g_n),\sigma)}$ and $c_1\otimes\ldots\otimes c_n\in\C^{\otimes n}$ and for all $((g_1,\ldots,g_n),\sigma)\in\G\wr S_n$. Indeed,
$$
\begin{array}{l}
((\tilde{m}_{g_1}\otimes\ldots\otimes\tilde{m}_{g_n})\otimes \sigma)(c_1\otimes\ldots\otimes c_n)\\
\qquad =  (\tilde{m}_{g_1}c_{\sigma^{-1}(1)}\otimes\ldots\otimes\tilde{m}_{g_n}c_{\sigma^{-1}(n)})\otimes \sigma\\
\qquad =  (\tensor*[^{{g_1}}]{{c_{\sigma^{-1}(1)}}}{}\tilde{m}_{g_1}\otimes\ldots\otimes \tensor*[^{{g_n}}]{{c_{\sigma^{-1}(n)}}}{}\tilde{m}_{g_n})\otimes \sigma\\
\qquad =  (\tensor*[^{{g_1}}]{{c_{\sigma^{-1}(1)}}}{}\otimes\ldots\otimes\tensor*[^{{g_n}}]{{c_{\sigma^{-1}(n)}}}{})((\tilde{m}_{g_1}\otimes\ldots\otimes \tilde{m}_{g_n})\otimes \sigma)\\
\qquad = \tensor*[^{((g_1,\ldots,g_n),\sigma)}]{{(c_1\otimes\ldots\otimes c_n)}}{}((\tilde{m}_{g_1}\otimes\ldots\otimes\tilde{m}_{g_n})\otimes \sigma).
\end{array}
$$
Henceforth, $\tilde{M}\wr S_n$ is a $\G\wr S_n$-graded $(A\wr S_n,A'\wr S_n)$-bimodule over $\C^{\otimes n}$.

(2) In this part, in order to prove our claim, we want to use a similar technique as in part (1), but with regard to the grading given by $\G\wr S_n$.

Henceforth, we regard $(A'\wr S_n)^{\mathrm{op}}$ as a $\G\wr S_n$-graded crossed product over $\C^{\otimes n}$, where the $((g_1,\ldots,g_n),\sigma)$-component of $(A'\wr S_n)^{\mathrm{op}}$ is:
$$(A'\wr S_n)^{\mathrm{op}}_{((g_1,\ldots,g_n),\sigma)}:=((A'\wr S_n)_{((g_1,\ldots,g_n),\sigma)^{-1}})^{\mathrm{op}},$$
for all $((g_1,\ldots,g_n),\sigma)\in\G\wr S_n$, and we consider:
$$
\begin{array}{l}
\Delta^{\C^{\otimes n}}_{\G\wr S_n}(A\wr S_n\otimes_{\C^{\otimes n}} (A'\wr S_n)^{\mathrm{op}}):=\\
\qquad\bigoplus_{((g_1,\ldots,g_n),\sigma)\in \G\wr S_n}((A\wr S_n)_{((g_1,\ldots,g_n),\sigma)}\otimes_{\C^{\otimes n}} (A'\wr S_n)^{\mathrm{op}}_{((g_1,\ldots,g_n),\sigma)}),
\end{array}
$$
which, by \cite[Lemma 2.8]{art:MM2}, is an $\O$-algebra.

Now, given the fact from part (1) of this theorem, that $\tilde{M}\wr S_n$ is a $\G\wr S_n$-graded $(A\wr S_n,A'\wr S_n)$-bimodule over $\C^{\otimes n}$, we obtain, by  \cite[Proposition 2.11]{art:MM2}, that $(\tilde{M}\wr S_n)_1=M^{\otimes n}$ extends to a $\Delta^{\C^{\otimes n}}_{\G\wr S_n}(A\wr S_n\otimes_{\C^{\otimes n}} (A'\wr S_n)^{\mathrm{op}})$-module and that we have the following isomorphisms of $\G\wr S_n$-graded $(A\wr S_n,A'\wr S_n)$-bimodules over $\C^{\otimes n}$:
$$(A\wr S_n)\otimes_{B^{\otimes n}}M^{\otimes n}\simeq M^{\otimes n}\otimes_{B'^{\otimes n}}(A'\wr S_n)\simeq \tilde{M}\wr S_n.$$
More exactly, these isomorphisms are:
$$f:(A\wr S_n)\otimes_{B^{\otimes n}}M^{\otimes n}\to \tilde{M}\wr S_n, \quad (a\otimes\sigma)\otimes m\mapsto (a\cdot \tensor*[^{\sigma}]{m}{})\otimes\sigma,$$
and
$$g:M^{\otimes n}\otimes_{B'^{\otimes n}}(A'\wr S_n)\to \tilde{M}\wr S_n,  \quad m\otimes(a'\otimes\sigma) \mapsto (m\cdot a')\otimes\sigma$$
for all $a\in A^{\otimes n}$, $a'\in A'^{\otimes n}$, $m\in M^{\otimes n}$ and $\sigma \in S_n$.

We prove that $f$ is an isomorphism of $\G\wr S_n$-graded $(A\wr S_n,A'\wr S_n)$-bimodules over $\C^{\otimes n}$. The verification for $g$ is similar. 
The left $A\wr S_n$-module structure of $(A\wr S_n)\otimes_{B^{\otimes n}}M^{\otimes n}$ is clear. We recall and particularize from \cite[Proposition 2.11]{art:MM2} the right $A'\wr S_n$-module structure of $(A\wr S_n)\otimes_{B^{\otimes n}}M^{\otimes n}$:
\begin{align*}
&((a\otimes \sigma)\otimes_{B^{\otimes n}} m)\cdot (a'_g\otimes \tau)\\
&\quad = ((a\otimes \sigma)(u_g\otimes \tau)) \otimes_{B^{\otimes n}} (((\tensor*[^{\tau^{-1}}]{u}{^{-1}_{g}}\otimes \tau^{-1})\otimes_{\C^{\otimes n}}(a'_g\otimes \tau)^{\op})m)\\
&\quad = (a \cdot\tensor*[^{\sigma}]{u}{_g}\otimes \sigma\tau) \otimes_{B^{\otimes n}} ((\tensor*[^{\tau^{-1}}]{u}{^{-1}_{g}}\otimes \tau^{-1})(m\otimes e)(a'_g\otimes \tau))\\
&\quad = (a \cdot\tensor*[^{\sigma}]{u}{_g}\otimes \sigma\tau) \otimes_{B^{\otimes n}} ((\tensor*[^{\tau^{-1}}]{u}{^{-1}_{g}}\cdot\tensor*[^{\tau^{-1}}]{m}{}\otimes \tau^{-1})(a'_g\otimes \tau))\\
&\quad = (a \cdot\tensor*[^{\sigma}]{u}{_g}\otimes \sigma\tau) \otimes_{B^{\otimes n}} (\tensor*[^{\tau^{-1}}]{u}{^{-1}_{g}}\cdot\tensor*[^{\tau^{-1}}]{m}{}\cdot\tensor*[^{\tau^{-1}}]{{a'_g}}{}),
\end{align*}
for all $a\in A^{\otimes n}$, $\sigma,\tau\in S_n$, $m\in M^{\otimes n}$, $a'_g\in A'^{\otimes n}_g$, $g\in \G^n$ and $u_g$ is an invertible homogeneous element of $A^{\otimes n}_g$.
	We start by proving that $f$ is a morphism of $(A\wr S_n,A'\wr S_n)$-bimodules:
\begin{align*}
f((b\otimes \tau)\cdot((a\otimes \sigma)\otimes_{B^{\otimes n}} m)) & =  f(((b\otimes \tau)\cdot(a\otimes \sigma))\otimes_{B^{\otimes n}} m)\\
& =  f((b\cdot\tensor*[^{\tau}]{a}{}\otimes \tau\sigma)\otimes_{B^{\otimes n}} m)\\
& =   b\cdot\tensor*[^{\tau}]{a}{}\cdot \tensor*[^{\tau\sigma}]{m}{}\otimes \tau\sigma\\
& =   b\cdot\tensor*[^{\tau}]{{(a\cdot \tensor*[^{\sigma}]{m}{})}}{}\otimes \tau\sigma\\
& =  (b\otimes\tau)(a\cdot \tensor*[^{\sigma}]{m}{}\otimes \sigma)\\
& =  (b\otimes\tau)f((a\otimes \sigma)\otimes_{B^{\otimes n}} m),
\end{align*}
\begin{align*}
&f(((a\otimes \sigma)\otimes_{B^{\otimes n}} m)\cdot (a'_g\otimes \tau))\\
&\quad = f((a \cdot\tensor*[^{\sigma}]{u}{_g}\otimes \sigma\tau) \otimes_{B^{\otimes n}} (\tensor*[^{\tau^{-1}}]{u}{^{-1}_{g}}\cdot\tensor*[^{\tau^{-1}}]{m}{}\cdot\tensor*[^{\tau^{-1}}]{{a'_g}}{}))\\
&\quad = (a \cdot\tensor*[^{\sigma}]{u}{_g} \cdot \tensor*[^{\sigma\tau}]{{(\tensor*[^{\tau^{-1}}]{u}{^{-1}_{g}}\cdot\tensor*[^{\tau^{-1}}]{m}{}\cdot\tensor*[^{\tau^{-1}}]{{a'_g}}{})}}{})\otimes \sigma\tau\\
&\quad = (a \cdot\tensor*[^{\sigma}]{u}{_g} \cdot \tensor*[^{\sigma\tau\tau^{-1}}]{u}{^{-1}_{g}}\cdot\tensor*[^{\sigma\tau\tau^{-1}}]{m}{}\cdot\tensor*[^{\sigma\tau\tau^{-1}}]{{a'_g}}{})\otimes \sigma\tau\\
&\quad = (a \cdot\tensor*[^{\sigma}]{u}{_g} \cdot \tensor*[^{\sigma}]{u}{^{-1}_{g}}\cdot\tensor*[^{\sigma}]{m}{}\cdot\tensor*[^{\sigma}]{{a'_g}}{})\otimes \sigma\tau\\
&\quad = (a \cdot\tensor*[^{\sigma}]{m}{}\cdot\tensor*[^{\sigma}]{{a'_g}}{})\otimes \sigma\tau\\
&\quad = ((a \cdot\tensor*[^{\sigma}]{m}{})\otimes \sigma)(a'_g\otimes \tau)\\
&\quad = f((a\otimes \sigma)\otimes_{B^{\otimes n}} m)(a'_g\otimes \tau),
\end{align*}
for all $a,b\in A^{\otimes n}$, $\sigma,\tau\in S_n$, $m\in M^{\otimes n}$, $a'_g\in A'^{\otimes n}_g$, $g\in \G^n$ and $u_g$ is an invertible homogeneous element of $A^{\otimes n}_g$.

Next, we will prove that $f$ is $\G\wr S_n$-grade preserving. We recall from \cite[Proposition 2.11]{art:MM2} that the $\G\wr S_n$-grading of $(A\wr S_n)\otimes_{B^{\otimes n}}M^{\otimes n}$ is given by $A\wr S_n$. We have:
$$
\begin{array}{l}
f(((a_{g_1}\otimes\ldots \otimes a_{g_n})\otimes\sigma)\otimes_{B^{\otimes n}}(m_1\otimes \ldots\otimes m_n))\\ \quad = ((a_{g_1}m_{\sigma^{-1}(1)}\otimes\ldots\otimes a_{g_n}m_{\sigma^{-1}(n)})\otimes\sigma\\
\quad \in ((A_{g_1}\tilde{M}_1\otimes\ldots\otimes A_{g_n}\tilde{M}_1)\otimes\O\sigma\\
\quad \subseteq ((\tilde{M}_{g_1}\otimes\ldots\otimes\tilde{M}_{g_n})\otimes\O\sigma\\
\quad = (\tilde{M}\wr S_n)_{((g_1,\ldots,g_n),\sigma)},
\end{array}
$$
for all $(a_{g_1}\otimes\ldots \otimes a_{g_n})\otimes \sigma\in (A\wr S_n)_{((g_1,\ldots,g_n),\sigma)}$, $m_1\otimes \ldots\otimes m_n\in M^{\otimes n}$ and for all $((g_1,\ldots,g_n),\sigma)\in \G\wr S_n$.

Finally, we will prove that $f$ is bijective. Because both modules have the same $\O$-rank, it is enough to prove that $f$ is surjective. If $(\tilde{m}_{g_1}\otimes\ldots\otimes\tilde{m}_{g_n})\otimes \sigma$ is an arbitrary element of $(\tilde{M}\wr S_n)_{((g_1,\ldots,g_n),\sigma)}$, then it is clear that 
$$((u^{-1}_{g^{-1}_1}\otimes\ldots\otimes u^{-1}_{g^{-1}_n})\otimes \sigma)\otimes (u_{g^{-1}_{\sigma(1)}}\tilde{m}_{g_{\sigma(1)}}\otimes\ldots\otimes u_{g^{-1}_{\sigma(n)}}\tilde{m}_{g_{\sigma(n)}})\in (A\wr S_n)\otimes_{B^{\otimes n}}M^{\otimes n}$$ 
and that
$$
\begin{array}{l}
f(((u^{-1}_{g^{-1}_1}\otimes\ldots\otimes u^{-1}_{g^{-1}_n})\otimes \sigma)\otimes (u_{g^{-1}_{\sigma(1)}}\tilde{m}_{g_{\sigma(1)}}\otimes\ldots\otimes u_{g^{-1}_{\sigma(n)}}\tilde{m}_{g_{\sigma(n)}}))\\
\quad = ((u^{-1}_{g^{-1}_1}\otimes\ldots\otimes u^{-1}_{g^{-1}_n})\cdot \tensor*[^{\sigma}]{{(u_{g^{-1}_{\sigma(1)}}\tilde{m}_{g_{\sigma(1)}}\otimes\ldots\otimes u_{g^{-1}_{\sigma(n)}}\tilde{m}_{g_{\sigma(n)}})}}{})\otimes \sigma \\
\quad = ((u^{-1}_{g^{-1}_1}\otimes\ldots\otimes u^{-1}_{g^{-1}_n})\cdot(u_{g^{-1}_{1}}\tilde{m}_{g_{1}}\otimes\ldots\otimes u_{g^{-1}_{n}}\tilde{m}_{g_{n}}))\otimes \sigma \\
\quad = (u^{-1}_{g^{-1}_1}u_{g^{-1}_{1}}\tilde{m}_{g_{1}}\otimes\ldots\otimes u^{-1}_{g^{-1}_n}u_{g^{-1}_{n}}\tilde{m}_{g_{n}})\otimes \sigma \\
\quad = (\tilde{m}_{g_{1}}\otimes\ldots\otimes \tilde{m}_{g_{n}})\otimes \sigma,
\end{array}
$$
for all $((g_1,\ldots,g_n),\sigma)\in \G\wr S_n$, where $u_{g}$ represents an invertible homogeneous element of $A_{g}$, for all $g\in\G$.

(3) Furthermore, by Proposition \ref{prop:tensorProduct_Morita}, $\tilde{M}^{\otimes n}$ is a $\G^n$-graded $(A^{\otimes n},A'^{\otimes n})$-bimodule over $\C^{\otimes n}$, which induces a $\G^n$-graded Morita equivalence over $\C^{\otimes n}$ between $A^{\otimes n}$ and $A^{\otimes n}$, thus by  \cite[Theorem 5.1.2]{book:Marcus1999} with respect to the $\G^n$-grading, we have that $(\tilde{M}^{\otimes n})_1={M}^{\otimes n}$ is a $(B^{\otimes n},B'^{\otimes n})$-bimodule, which induces a Morita equivalence between $B^{\otimes n}$ and $B'^{\otimes n}$.

Now, by the previous statements, and by using \cite[Theorem 3.3]{art:MM2} with respect to the $\G\wr S_n$-grading, we obtain that $\tilde{M}\wr S_n$ induces a $\G\wr S_n$-graded Morita equivalence over $\C^{\otimes n}$  between $A\wr S_n$ and $A'\wr S_n$. 
\end{proof}
\medskip
\medskip
\medskip
\medskip
\medskip
\medskip
\medskip
\medskip
\medskip
\medskip

\phantomsection

\end{document}